\documentclass[leqno, 11pt]{amsart}
\usepackage{amsmath,amsfonts,amsthm,mathrsfs,amssymb}
\usepackage{enumerate}
\usepackage{graphicx}
\usepackage{mathtools}
\usepackage{bm}

\newcommand{\limnk}{\underset{\underset{k}{\longrightarrow}}{\lim}}
\newcommand{\limnki}{\underset{t \rightarrow \infty} {\lim}}

\newcommand{\mint}{\underset{t}{\min}}

\theoremstyle{plain}
\newtheorem{thm}{Theorem}[section]

\newtheorem{lem}[thm]{Lemma}

\newtheorem{prop}[thm]{Proposition}

\theoremstyle{definition}

\newtheorem{exmp}[thm]{Example}

\newcommand{\beqn}{\begin{eqnarray*}}
\newcommand{\eeqn}{\end{eqnarray*}}
\newcommand{\beq}{\begin{eqnarray}}
\newcommand{\eeq}{\end{eqnarray}}

\newcommand\lto{\longrightarrow}

\newcommand\ip{\mathfrak{p}}
\newcommand{\p}{{\mathfrak p}}
\newcommand{\q}{{\mathfrak q}}
\newcommand\im{\mathfrak{m}}
\def\aa{\mathfrak{a}}
\def\aad{\mathfrak{ad}}
\def \rr{\mathfrak{r}}
\def\dd{\mathfrak{d}}

\newcommand{\kk}{\Bbbk}

\def\Z{{\mathcal Z}}
\def\R{{\mathcal R}}
\def\Gr{{\mathcal G}}
\def\Fi{{\mathcal F}}

\def\CM{{\mathcal M}}
\newcommand{\m}{{\mathfrak m}}
\def\iP{{\mathbb P}}

\def\C.{C_\bullet}
\def\F.{F_\bullet}
\def\k.{\mathcal{K}_{\bullet}}

\newcommand\ra{\rightarrow}

\newcommand\depth{\textnormal{depth}}

\def\height{\operatorname{ht}}
\def\max{\operatorname{max}}

\newcommand\sym{\textnormal{Sym}}

\newcommand\reg{\textnormal{reg}}

\newcommand{\been}{\begin{enumerate}}
\newcommand{\eeen}{\end{enumerate}}

\usepackage[margin=1in]{geometry}

\usepackage{paralist}

\makeatletter
\usepackage[colorlinks,pagebackref=true]{hyperref}
\@addtoreset{equation}{section}
\def\theequation{\thesection.\@arabic \c@equation}

\def\@citecolor{black}
\def\@linkcolor{black}
\def\@urlcolor{black}

\def\newblock{\hskip .11em plus .33em minus .07em}

\title{Curves in $\iP^n$ of analytic spread at most $n$}

\author{Marc Chardin}
\address{
Institut de Math{\'e}matiques de Jussieu\\
CNRS \& Sorbonne Universit{\'e}\\
4, place Jussieu\\
F--75252 Paris cedex 05\\
France}
\email{marc.chardin@imj-prg.fr}
\author{Clare D'Cruz}
\address{Chennai Mathematical Institute, Plot H1, SIPCOT IT Park,   Chennai 603 103, India}
\email{clare@cmi.ac.in}
\begin{document}

\maketitle

\tableofcontents

 \begin{abstract}
We study   closed subschemes $X$ in  $\iP^n$ of dimension one, locally defined  at any point by at most $n$ equations such that  the analytic spread  of  $I_{\m}$  is at most $n$, where $I  \subseteq  \kk[x_0, \ldots, x_n] $ is the defining ideal of $X$ and $\m = (x_0, \ldots, x_n)$. In this situation, we show that, under mild conditions, all the powers  of $I_{\m}$ have positive depth, hence the limit depth of $I_{\m}$ is $1$ unless $I$ is a complete intersection. Moreover, the regularity of the Rees ring is at most one and the fiber cone is Cohen-Macaulay.   This applies  to every ideal defining a  monomial curve in $\iP^3$.
 \end{abstract}

\section{Introduction.}\label{sec:intro}
Let  $(R, \m)$ be a local ring  of  finite dimension with infinite residue field and $I$ an ideal of positive height in $R$. The analytic deviation, the depth function  and the Castelnuovo-Mumford regularity of blowup algebras,  namely
 the Rees algebra  $\R_I := \oplus_{t \geq 0} I^t$, the associated graded ring  
$\Gr_I := \R_I / I \R_I$   and the fiber cone  $\Fi_I := \R_I \otimes R/ \m$  have been investigated in the past few decades. 
  The analytic spread of $I$, $\aa(I) := \dim (\Fi_I)$ has  played an important role in the study of blowup algebras.  It is well known that  $\height(I) \leq \aa(I)$,   where 
  $\height(I)$ denotes the height of $I$ \cite{n-rees-1954}.  Motivated by this result, Huneke and Huckaba defined the analytic deviation of $I$ as  $\aad(I):= \aa(I)  - \height(I)$  \cite{huneke-huckaba-powers}. Ideals of small analytic deviation were particularly investigated. One of the remarkable results was by  Cowsik and Nori and they   showed that if $\p$ is a prime ideal in a Cohen-Macaulay domain $R$ such that $R_{\p}$ is regular and $\dim(R/ \p)=1$, then $\p$ is a complete intersection if and and only if  $\aad(\p)=0$ \cite[Proposition~3]{cowsik-nori}. 
Motivated by this result,   Huneke and Huckaba studied ideals $I$ of analytic deviation at most two  \cite{huneke-huckaba-powers}.     
For an ideal $I$ in a Cohen-Macaulay local ring $R$  of analytic deviation one, under some mild assumptions  on $I$, they give necessary and sufficient conditions for $\Gr_I$  \cite[Theorem~2.9]{huneke-huckaba-powers} and  $\R_I$ to be Cohen-Macaulay \cite[Thereom~2.1]{huneke-rees}.
 In \cite[Therorem~1]{zarzuela}, Zarzuela  obtained interesting results on the  depth  of the Rees algebra $\R_I$ and the associated graded ring $\Gr_I$ for  generically complete intersection ideals with analytic deviation one. Properties of the blowup algebras $\R_I$ and $\Gr_I$ for ideals of analytic deviation one has also been studied in \cite{goto-huckaba}, \cite{goto-nakamura-nishida}, \cite{goto-nakamura}, \cite{trung0} and \cite{vascon}.
 
Interest in the depth function  $\dd_I(t):=\depth(R/ I^t) $  goes back to the work of Burch  which gives the following inequality for any ideal $I$ in a Noetherian  local ring $(R, \m)$:
 ${\displaystyle \mint~\dd_I(t)\leq \dim(R)- \aa(I)}$ \cite{burch}.  In \cite{brodmann}, Brodmann showed that  the asymptotic value of $\dd_I(t)$ exists and used his result to improve on Burch's inequality. In \cite{herzog-hibi},  Herzog and Hibi  studied the asymptotic nature of $\dd_I(t)$ for several ideals which arise from combinatorics. They
called the asymptotic value of $\dd_I(t)  $ the  {\em limit depth of} $I$  which they denoted by $\limnki~   \dd_I(t) $.  
The depth function has also been studied in \cite{bandari}, \cite{herzog-sqfree},  \cite{mastuda} and \cite{zarzuela}. 

The Castelnuovo-Mumford regularity of blowup algebras has been of interest in the last few decades.
In \cite[Proposition~4.1]{JU}, Johnson and Ulrich showed that for any proper ideal
 in  a Noetherian local ring $R$,  the regularity of $\R_I$ and $\Gr_I$  are equal.  
 Equality of the  Castelnuovo-Mumford regularity of  $\R_I$ and $\Gr_I$ has also been studied in  \cite{trung}, 

In this paper we consider   curves in $\iP^n$ whose defining ideal $I \subseteq S= \kk[x_0, \ldots, x_n]$  satisfy the condition  $\aad(I_{\m}) \leq 1$. 
 We  first identify conditions for an  ideal $I$ in a Noetherian unmixed local ring so that the reduction number is at most one, the depth is at least two and the 
 regularity of $\R_I$ is at most one
(see Lemma~\ref{RegRees}).  We apply these results  to obtain our main result in section 2 (see Theorem~\ref{main-thm}).
 We  
show that if $I$ is the defining ideal of a closed subscheme $X$, then under some assumptions, if  $\aad(I) \leq 1$, 
then  $\dd_{I_{\m}}(t)=1$ and  hence $\limnki \dd_{I_{\m}}(t)=1$ (see Theorem~\ref{main-thm}). This gives a   precise value  to the limit depth of $I$.  Moreover, $\reg(R_I)$ is at most one and we conclude that the fiber cone $\Fi_I$ is 
Cohen-Macaulay. This gives an estimate on the number of generators of $I^t$ for all $t \geq 1$. 
As a consequence we show that if $I$ is the defining ideal of a 
monomial curve in ${\iP}^3$  then all the conditions of Theorem~\ref{main-thm} are satisfied (see Example~\ref{example-p3}). 

In  section 3 we consider examples of monomial curves in  ${\iP}^4$. In Example~\ref{ex1} we give an infinite class of 
examples of curves whose defining ideal is generated by a $d$-sequence. In Example~\ref{ex2} and Example~\ref{ex3} we show that there exists an 
infinite class of  monomial curves  in ${\iP}^4$  with analytic spread four but not all the conditions of Theorem~\ref{main-thm} are 
satisfied.

{\bf Acknowledgement:}
 The second author thanks `The Foundation Sciences Math{\'e}matiques de Paris' for financial support,  Institut de Math{\'e}matiques de Jussieu,
CNRS \& Sorbonne Universit{\'e}  for local hospitality, partial support from INFOSYS and 
MATRICS grant MTR/2023/000661 funded by ANRF, Government of India. The computer software Macaulay2 \cite{M2} was a crucial input in  our investigations.

\section{Main result}
  In this section,  under some assumptions we show that the blowup algebras of 
 certain closed subschemes of dimension one in $\iP^n$  have a  nice behaviour.

Let $(R, \m)$ be a commutative ring and let  $S = \oplus_{n\geq 0} S_n$ be a finitely generated standard graded ring over $S_0 = R$. Let $S_{+}$ denote the the ideal generated by the homogeneous elements of positive degree of $S$ and let  $M$ be a graded $S$ module. 
Set $a(M) := \sup\{n | M_n \not = 0\}$. 
For $i \geq 0$, let $a_i(S) := a(H^i_{S+}(S))$, where $H^i_{S+}(S)$  is the $i$-th local cohomology functor with respect to $S_{+}$. The Castelnuovo-Mumford regularity of $S$ is defined as $\reg(S) := \max\{ a_i(S) + i: i \geq 0\}$.

We  recall results concerning the approximation complexes defined by Herzog, Simis and Vasconcelos \cite{HSV}. These are defined from a finite set of generators of an ideal and their homology is independent of the choice of generators, up to isomorphism. They provide information on $\R_I$ and $\Gr_I$, from information on Koszul cycles or Koszul homology built from the given generators.

In the following proposition, concerning the acyclicity of these complexes, a complex $C_\bullet$ {\it resolves} a module $M$ if $H_0(C_\bullet )$ is isomorphic to $M$ and $H_i(C_\bullet )=0$ for $i\not= 0$ (equivalently there is a map from $C_0$ to $M$ which is a quasi-isomorphism from $C_\bullet$ to the complex reduced to $M$ in homological degree 0).

\begin{prop}\label{RegZero}
Let $R$ be a Noetherian local ring and $I$ an $R$-ideal. The following are equivalent~:

(i) The $\Z$-complex resolves $\R_I$,

(ii) The $\CM$-complex resolves $\Gr_I$,

(iii) The $\CM$-complex has only $0$-th homology,

(iv) $\reg (\R_I )=0$,

(v) $\reg (\Gr_I  )=0$.

If further $R$ has infinite residue field, these are also equivalent to :

(vi) $I$ is generated by a $d$-sequence.
\end{prop}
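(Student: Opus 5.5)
The plan is to run through the $\Z$- and $\CM$-complexes. Fix generators $f_1,\dots,f_s$ of $I$, put $T=R[T_1,\dots,T_s]$, and use the standard facts from \cite{HSV}. The complex $\Z_\bullet$ has terms $Z_i\otimes_R T(-i)$, where $Z_i$ are the Koszul cycles, and $H_0(\Z_\bullet)=\sym(I)$. Likewise $\CM_\bullet$ has terms $H_i\otimes_{R/I}(R/I)[T](-i)$, where $H_i$ is Koszul homology, and $H_0(\CM_\bullet)=\sym_{R/I}(I/I^2)$. Both complexes are complexes of $T$-modules that are graded by $T$-degree. In each graded piece the terms are generated in degrees $\ge$ their homological index, and the differentials are linear in the $T_j$.

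\textbf{(i)$\Leftrightarrow$(ii)$\Leftrightarrow$(iii).} I would use the known relation between the two complexes: $\CM_\bullet$ is obtained from $\Z_\bullet$ through the exact sequences $0\to B_i\to Z_i\to H_i\to 0$. In \cite{HSV} it is shown that $\Z_\bullet$ is acyclic if and only if $\CM_\bullet$ is acyclic. Acyclicity of $\CM_\bullet$ means exactly that it has only $0$-th homology, which gives (ii)$\Leftrightarrow$(iii). The remaining point is that if either complex is acyclic, its $H_0$ is the corresponding blowup algebra. This is the statement $\sym(I)=\R_I$, i.e.\ $I$ is of linear type. I would prove it by a graded induction: acyclicity lets one compare $\sym_t(I)\to I^t$ degree by degree, and the degree-$t$ strand of $\Z_\bullet$ being exact forces the kernel of $\sym_t(I)\to I^t$ to vanish. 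If that direct route gets messy, I would instead cite the theorem of \cite{HSV} that an acyclic $\CM$-complex implies linear type.

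\textbf{(i)$\Rightarrow$(iv) and (ii)$\Rightarrow$(v).} Here I would compute regularity with respect to $T_+$, which coincides with regularity over $\R_{I+}$ by change of rings. If $\Z_\bullet$ resolves $\R_I$, each term $Z_i\otimes T(-i)$ has $T_+$-regularity $i$, since $\reg(T(-i))=i$. Splitting the resolution into short exact sequences, the usual shift estimate gives
\[
\reg\R_I\le \max_i\{\reg(Z_i\otimes T(-i))-i\}=0 .
\]
Since $\R_I$ is nonzero in degree $0$, $\reg\R_I\ge0$, so equality holds. The argument for $\Gr_I$ is identical using $\CM_\bullet$.

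\textbf{(iv)$\Leftrightarrow$(v).} This is Johnson--Ulrich \cite[Proposition~4.1]{JU}: $\reg\R_I=\reg\Gr_I$.

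\textbf{(v)$\Rightarrow$(iii).} I expect this to be the main obstacle. The approach is to consider the graded strands of $\CM_\bullet\to\Gr_I$ and use a spectral sequence of the double complex obtained by applying the Čech complex on $T_+$. Regularity $0$ means $H^j_{T_+}(\Gr_I)_\mu=0$ for $\mu>-j$. The terms $H_i\otimes(R/I)[T](-i)$ have local cohomology concentrated in degrees $\le i-\mathrm{rank}$. Comparing end terms by a top-degree argument should show that the homology modules $H_i(\CM_\bullet)$, which are $T_+$-torsion since they vanish locally on $\mathrm{Proj}$, must be zero. If this is hard to execute directly, I would use the known characterization (Trung, Kühl) that $\reg\Gr_I=0$ is equivalent to the vanishing of the $T$-degree-$\ge1$ parts of all $H_i(\CM_\bullet)$. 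Combined with the vanishing of degree-$0$ homology, this gives acyclicity.

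\textbf{(vi).} With infinite residue field, I would reduce to the statement that $I$ is generated by a $d$-sequence exactly when the $\CM$-complex is acyclic with $H_0=\Gr_I$, via a good choice of generators. $d$-sequences give acyclic $\CM$-complexes by \cite{HSV}. For the converse, choose generators general in $I/\m I$ and use the regularity-$0$ condition degree by degree, i.e.\ $((f_1,\dots,f_{i}):f_{i+1})\cap I=(f_1,\dots,f_i)$. I would prove this by induction on $i$ with Koszul homology. The genericity argument here is the second delicate point.
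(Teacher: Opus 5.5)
There is a genuine gap: the implication (v)$\Rightarrow$(iii), from regularity back to acyclicity, is never proved, and the equivalence depends on it.

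Your sketch for this step assumes the modules $H_i(\CM_\bullet)$ are $T_+$-torsion ``since they vanish locally on $\mathrm{Proj}$''. That is false in general. Take $I=(x^2,xy,y^2)\subset k[x,y]_{(x,y)}$. Then $H_2=0$, and $Z_1$ is free on $e=(y,-x,0)$ and $f=(0,y,-x)$. The boundaries are $B_1=(xe,\,yf,\,ye+xf)$, so $ye\neq 0$ in $H_1$. On the other hand $ye\mapsto y^2T_1-xyT_2\in IT$. Hence $ye\otimes T^\beta$ is a nonzero element of $H_1(\CM_\bullet)$ for every monomial $T^\beta$. Without torsion, a top-degree comparison with $H^j_{T_+}(\Gr_I)$ cannot force the homology to vanish. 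Your fallback ``characterization'' is literally (v)$\Leftrightarrow$(iii), because $H_i(\CM_\bullet)$ lives in $T$-degrees $\ge i\ge 1$. So it is the statement itself, not a tool for proving it.

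The paper closes this step differently. It first replaces $R$ by $R(U)=R[U]_{\im R[U]}$, which leaves (i)--(v) unchanged but makes the residue field infinite. It then uses $d$-sequences as the hub: (v)$\Leftrightarrow$(vi) by \cite[12.7, 12.8, 12.10]{HSV}, and (i)$\Leftrightarrow$(iii)$\Leftrightarrow$(vi) by \cite[12.9]{HSV}. You never make this residue-field reduction. So even your $d$-sequence/general-generators route for (vi) would not give (i)--(v) over a finite residue field.

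Second, your claim that $\Z_\bullet$ is acyclic if and only if $\CM_\bullet$ is, and that acyclicity of either complex forces its $H_0$ to be the blowup algebra, is false for $\Z_\bullet$. In the same example, $\Z_\bullet$ is the Koszul complex on the regular sequence $yT_1-xT_2,\ yT_2-xT_3$, so it is acyclic. Yet $I$ is not of linear type, since $T_1T_3-T_2^2$ is a relation, and $\CM_\bullet$ is not acyclic, as shown above. So your argument for (i)$\Rightarrow$(iii), and the proposed graded induction on $\Z_\bullet$, cannot work. Only ``$\CM_\bullet$ acyclic $\Rightarrow$ $\Z_\bullet$ acyclic and $I$ of linear type'' holds.

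For (i)$\Rightarrow$(iii) you must use the linear-type hypothesis contained in (i). One way: set $\mathcal{B}_i=B_i\otimes_R T(-i)$ and use the exact sequence $0\to\mathcal{B}_\bullet\to\Z_\bullet\to\CM_\bullet\to 0$. When $\Z_\bullet$ is acyclic, this identifies $H_1(\CM_\bullet)$ with the kernel of $\sym(I)_+(1)\to\sym(I)$, $T_j\mapsto f_j$, and that kernel vanishes exactly when $\sym(I)=\R_I$.

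Your direct proofs of (i)$\Rightarrow$(iv) and (ii)$\Rightarrow$(v), from the linear shape of the two complexes, are correct and more elementary than the paper's citations. With them, together with (ii)$\Leftrightarrow$(iii), the cycle (i)$\Rightarrow$(iv)$\Leftrightarrow$(v)$\Rightarrow$(iii)$\Rightarrow$(i) would close, but only once (v)$\Rightarrow$(iii) is actually established.
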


Recall that  $I\R_I =(\R_I)_+ (-1)\subset \R_I$ and $\R_I/I\R_I =R/I\oplus_R I/I^2 \oplus_R \cdots$. 

\begin{proof} Replacing $(R, \im ,k)$ by $(R(U),\im R(U), k(U))$ (where $U$ is an indeterminate), as in \cite[p.~17]{nagata}, properties (i) to (v) are unchanged. 
Hence we may assume that the residue field is infinite. 

By \cite[4.1]{JU}, (iv) and (v) are equivalent. By \cite[12.9]{HSV}, (i), (iii) and (vi) are 
equivalent. The implication (ii)$\Rightarrow$(iii) is trivial and the reverse one 
holds since (i) is implies by (iii) and shows that $\sym_R (I/I^2)=\R_I /I\R_I$
by  \cite[3.1]{HSV}. Finally (v) and (vi) are equivalent by  \cite[12.7, 12.8 and 12.10]{HSV}.
\end{proof}

\begin{lem}
\label{RegRees}
Let $(R,\im )$ be a Noetherian unmixed local ring and $J \subset \im$ be an ideal generated by $r+1$ elements such that $\depth~J\geq r \geq 1$. 
Assume that $\dim R\geq r+1$,
 $R$ satisfies  $S_{r+1}$
and
 $J_\ip$ is of linear type for all $\ip \supset J$  with $\height(\ip ) = \height(J)$. Then,
 
(a) $J$ is of linear type and $\reg (\R_J)=0$.\smallskip

(b)  Let $I:=J^{sat}=\cup_i (J:\im^i)$. If $\depth ~R \geq r+2$, then $H^0_\im (\sym_R (J))=0$,
$$
H^1_\im (\sym_R (J))\simeq (I/J)\otimes_R S\{ -1\} ,
$$
$IJ=I^2$, $\depth_\im ( I^t)\geq 2$ for any $t$ and $\reg (\R_I)=1$ unless $I=J$.
\end{lem}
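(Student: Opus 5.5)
The plan is to run everything through the $\Z$-complex of $J$ and Proposition~\ref{RegZero}. Write $J=(f_0,\dots ,f_r)$, let $S=R[T_0,\dots ,T_r]$, and let $Z_i,B_i,H_i$ denote the Koszul cycles, boundaries and homology of $f_0,\dots ,f_r$. Since $\depth J\geq r=(r+1)-1$, we have $H_i=0$ for $i\geq 2$, so $Z_i=B_i$ for $i\geq 2$, and only $H_1$ can be nonzero. The truncated Koszul complex $0\to K_{r+1}\to\cdots\to K_{i+1}\to Z_i\to 0$ is then exact for $i\geq 1$, and it resolves $Z_i$ by free modules in length $r-i$. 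Localizing at a prime $\ip$, this gives the bound
$$\depth (Z_i)_\ip\geq \min\{\depth R_\ip ,\ \depth R_\ip -(r-i)\}.$$
Combined with $S_{r+1}$ this yields the depth estimates needed below.

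For (a), the $\Z$-complex $0\to Z_r\otimes S(-r)\to\cdots\to Z_1\otimes S(-1)\to S$ has $H_0=\sym_R(J)$. I will show it is acyclic by induction on $\height\ip$ over primes $\ip\supseteq J$. Outside $V(J)$ the ideal becomes the unit ideal and acyclicity is trivial. At primes with $\height\ip=\height J$, $J_\ip$ is of linear type by hypothesis. For an almost complete intersection with vanishing higher Koszul homology, this is equivalent to acyclicity of $\Z$ at $\ip$, via \cite[12.9]{HSV} / Proposition~\ref{RegZero}(i). For $\height\ip>\height J$, the homology of the localized complex has finite length by induction. The complex has length $r$, so I will apply the Peskine--Szpiro acyclicity lemma, which requires $\depth (Z_i)_\ip\geq i$. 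This follows from the displayed bound, using $\depth R_\ip\geq\min(r+1,\height\ip)$ together with $\height\ip\geq r+1$ (as $\height J\geq r$). Hence $\Z$ resolves $\sym(J)$. The fact that $H_0$ is the Rees algebra, i.e.\ that $\sym(J)$ is torsion-free, follows since linear type holds at all minimal primes of $J$ and $\sym(J)$ has no embedded components by the same depth count. Proposition~\ref{RegZero} then gives $\reg\R_J=0$.

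For (b), I will compute $H^j_\im(\sym_R(J))$ from the resolution by the usual spectral sequence, with the terms $H^{p}_\im(Z_i)\otimes S(-i)$ contributing to $H^{p-i}_\im$. For $i\geq 2$ the displayed bound gives $\depth Z_i\geq \depth R-(r-i)\geq i+2$, so these terms contribute nothing in cohomological degrees $0,1$. For $Z_1$, the sequences $0\to Z_1\to R^{r+1}\to J\to 0$ and $0\to J\to R\to R/J\to 0$, with $\depth R\geq 3$, give $H^1_\im(Z_1)=0$ and $H^2_\im(Z_1)\cong H^1_\im(J)\cong H^0_\im(R/J)=I/J$. Together with $H^0_\im(S)=H^1_\im(S)=0$, this yields $H^0_\im(\sym J)=0$ and $H^1_\im(\sym J)\simeq (I/J)\otimes_R S\{-1\}$.

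Reading off degree $t$, we get $H^1_\im(J^t)=(J^t)^{sat}/J^t\cong$ the image of $(I/J)\otimes S_{t-1}$, which is $IJ^{t-1}/J^t$. Hence $(J^t)^{sat}=IJ^{t-1}$. Since $I^t\subseteq (J^t)^{sat}$, this forces $I^t=IJ^{t-1}$, in particular $I^2=IJ$, and $I^t$ is saturated. Consequently $H^0_\im(I^t)=0$ and $H^1_\im(I^t)=H^0_\im(R/I^t)=0$, so $\depth I^t\geq 2$. For the regularity I will use the exact sequence $0\to\R_J\to\R_I\to C\to 0$ with $C\cong (I/J)\otimes_R S(-1)$. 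This $C$ is an extended module over a finite length $R$-module, so $\reg C=1$ when $I\neq J$, and hence $\reg\R_I\leq\max(\reg\R_J,\reg C)=1$. Equality holds because $\reg\R_I=0$ would force $I$ to be of linear type, generated by a $d$-sequence (Proposition~\ref{RegZero}). That contradicts the nontrivial relation $I^2=IJ$ with $I\neq J$, which produces non-linear relations, or equivalently contradicts the nonvanishing of $H^{\dim}_{S_+}$ coming from $C$ in the long exact sequence.

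The main obstacle I expect is the acyclicity step in (a), namely matching $S_{r+1}$ and unmixedness precisely with the depth requirements of the acyclicity lemma at every prime, and ensuring that the $H_0$ obtained is torsion-free so that it really is $\R_J$. The final equality $\reg\R_I=1$ also needs some care.
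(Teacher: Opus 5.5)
Your argument is correct in substance. Part (b) is essentially the paper's proof: the same spectral sequence from the $\Z$-resolution of $\sym_R(J)$, the same computation $H^2_\im(Z_1)\simeq H^1_\im(J)\simeq I/J$, the same deduction $(J^t)^{sat}=IJ^{t-1}\supseteq I^t$, and the same sequence $0\to\R_J\to\R_I\to (I/J)\otimes_R S\{-1\}\to 0$.

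Part (a) takes a different route. The paper assumes acyclicity of $\Z_\bullet$ (almost complete intersection) and works with the $\CM$-complex. Its first homology sits inside $H_1\otimes_R S\{-1\}$ and vanishes at the minimal primes of $J$. Moreover $H_1$ has positive depth at every non-minimal $\ip\supseteq J$, because $B_1$ and $Z_1$ have depth $\geq 2$ there. Then (iii)$\Rightarrow$(i),(iv) of Proposition~\ref{RegZero} gives linear type and $\reg\R_J=0$ at once. You instead prove acyclicity of $\Z_\bullet$ with the acyclicity lemma, and get linear type from a second depth count on the resolution. At primes of height $\geq r+1$ you have $\depth (Z_i)_\ip\geq i+1$, hence $\depth\sym_t(J)_\ip\geq 1$. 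So the kernel of $\sym_R(J)\to\R_J$, which vanishes at minimal primes, is zero, and (i)$\Rightarrow$(iv) finishes. This avoids the $\CM$-complex and makes the $\Z$-acyclicity explicit under $S_{r+1}$; the paper's version is shorter.

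For the equality $\reg\R_I=1$, your argument is a valid, more conceptual alternative to the paper's computation $H^{r+1}_{S_+}(\R_I)_{-r}\simeq I/J$. Regularity $0$ forces linear type. An ideal of linear type has fiber cone $\sym_k(I/\im I)$, a polynomial ring, so it has no proper reduction, contradicting $I^2=IJ$ with $J\neq I$. State this reason rather than ``produces non-linear relations''.

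Three slips need fixing, though none changes the outcome.

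\begin{enumerate}
\item The truncated Koszul complex $0\to K_{r+1}\to\cdots\to K_{i+1}$ resolves $B_i$, not $Z_i$, and $B_1\neq Z_1$ when $H_1\neq 0$. Your displayed bound is in fact false for $i=1$. In (b) with $I\neq J$ you yourself get $H^2_\im(Z_1)\simeq I/J\neq 0$, while the bound would give $\depth Z_1\geq\depth R-r+1\geq 3$. What you actually need for $i=1$ is $\depth (Z_1)_\ip\geq\min(\depth R_\ip,2)$, and this follows from $0\to Z_1\to R^{r+1}\to J\to 0$.
\item Linear type is not equivalent to acyclicity of $\Z_\bullet$. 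For $\im^2$ in a two-dimensional regular local ring the $\Z$-complex is acyclic, but the ideal is not of linear type. You do not need this claim: at a minimal prime $\ip$ of $J$, $R_\ip$ is Cohen--Macaulay of dimension $\geq r$, so the acyclicity lemma applies there too and the induction needs no separate base case.
\item Linear type at minimal primes of height $r+1$ is not given by hypothesis. Justify it by noting that $J_\ip$ is a complete intersection there.
\end{enumerate}
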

\begin{proof} Notice that if $\depth~J>r$, then $J$ is
a complete intersection ideal, hence (a) and (b) follows. 
Else, $\depth~J=r$ and hence $J$ has height $r$ or $r+1$. 
Since $\depth~J \geq r$,  $H_i=0$ for $i\geq 2$.
 Consider the approximation complex on the $r+1$ generators of $J$ :
$$
\Z_{\bullet} \hphantom{space} 0\ra Z_r\otimes_R S\{ -r\} \ra \cdots \ra Z_1\otimes_R S\{ -1\} \ra Z_0 \otimes_R S\ra 0
$$
with $S:=R[T_1,\ldots ,T_{r+1}]$ and
$$
0\ra H_1\otimes_R S\{ -1\} \ra H_0 \otimes_R S\ra 0
$$
whose 0-th homology are respectively $\sym_R(J)$ and $\sym_R(J/J^2)$. 

Recall  that the first of these is acyclic, as $J$ is
an almost complete intersection. 

Let $\ip$ be a minimal prime of $J$. If $\height (\ip )=r$, then
$J_{\ip}$ is of linear type, hence by Proposition \ref{RegZero} 
$(i)\Rightarrow (iii)$, the
second complex is acyclic locally at $\ip$. If $\height (\ip )=r+1$, 
then, locally at $\ip$, $R_{\ip}$ is Cohen-Macaulay and therefore
$J_{\ip}$ is a complete intersection.

For (a), according to Proposition \ref{RegZero}, it suffices to prove that the depth of
$H_{1}$ is positive locally at any prime $\ip \supset J$ which
is not minimal, since this implies that the $\CM$-complex has only $0$-th homology. Such a  prime has height at least $r+1$.
Hence, as $R$ satisfies $S_{r+1}$, the depth of $B_{1}$ and
of $Z_{1}$ are at least $2$ locally at $\ip$, thus indeed the depth of $H_1$ is positive. 

Notice that $\depth_{\im} (B_i)=\depth_\im (R)-r+i$, for
all $i>0$, hence $\depth_{\im} (Z_i)=\depth_\im (R)-r+i$, for
all $i\geq 2$.

For (b), we compare the two spectral sequences coming from the double complex ${\mathcal C}^{\bullet}_{\im}\Z_{\bullet}$. The acyclicity of $\Z_\bullet$ implies that the total homology is the local cohomology with support in $\im$ of $H_0 (\Z_\bullet )=\sym_R (J)$. On the other hand, the spectral sequence with first terms $^1{E}^p_q = H^p_\im (\Z_q )=H^p_\im (Z_q )\otimes_R S\{ -q\}$ shows that :
$$
 H^1_\im (\sym_R (J))\simeq {^\infty{E}}^2_1 = {^1{E}}^2_1 =H^2_\im (Z_{1})\otimes_R S\{ -1\}\simeq H^1_\im (J)\otimes_R S\{ -1\}\simeq(I/J)\otimes_R S\{ -1\}.
 $$
Indeed, ${^1{E}}^p_p =0$ for all $p$ and ${^1{E}}^{p+1}_p =0$ for $p\neq 1$ since $\depth_\im (Z_i)\geq i+2$ for
$i\geq 2$ ; as ${^1{E}}^2_0 =0$ since $\depth_\im (Z_0)\geq r+2\geq 3$ it follows that  ${^\infty{E}}^2_1 = {^1{E}}^2_1$ and ${^\infty{E}}^{p+1}_p =0$ for $p\neq 1$. This shows that $H^1_\im (\sym_R (J))\simeq {^\infty{E}}^2_1 = {^1{E}}^2_1$ ; furthermore $H^0_\im (\sym_R (J))=0$ since ${^1{E}}^p_p =0$ for all $p$.

 As $\sym_R (J)=\R_{J}$ by (a), it follows that $(I/J)\otimes_R S_{t-1}\simeq (J^{t})^{sat}/J^{t}$
 via the map sending the class of $a\otimes T^{\alpha}$ to $a\phi (T^{\alpha})$ were $\phi :S\rightarrow \R_{J}$ is the map defined by the given generators of $J$. Hence $(I^t)^{sat}=(J^t)^{sat}=IJ^{t-1}\subseteq I^t$ for any $t$, showing that $I^t$ is saturated and equal to $IJ^{t-1}$. 
 The exact sequence 
 $$
 0\ra \R_J \ra \oplus_t (J^t)^{sat} \ra H^1_\im (\R_J)\ra 0
 $$
 provides the exact sequence:
  $$
 0\ra \R_J \ra \R_I \ra (I/J)\otimes_R S\{ -1\}\ra 0,
 $$
 proving that $\reg (\R_I )\leq 1$. Equality holds, unless $I=J$, since writing $S_+ =(T_1,\ldots ,T_{r+1})$, one has an
 exact sequence 
 $$
 H^{r+1}_{S_+}(\R_J )\ra  H^{r+1}_{S_+}(\R_I ) \ra  (I/J)\otimes_R H^{r+1}_{S_+}(S)\{ -1\} \ra 0
 $$
 and $H^{r+1}_{S_+}(\R_J )_{-r}=0$, because  $\reg (\R_J )=0$, showing that  
 $$H^{r+1}_{S_+}(\R_I )_{-r}=(I/J)\otimes_R H^{r+1}_{S_+}(S)_{-r-1}=I/J.$$
   \end{proof}

We now prove the main result in our paper. Let $I$ be an ideal in a local ring $(R, \m)$. 
 Let $\mu(I)$ denote the minimal number of generators of $I$.
 An ideal $J \subseteq  I$ is said to be a reduction of $I$ if   $JI^n = I^{n+1}$ for some $n \geq 0$. A reduction $J$ is said to be a minimal reduction of $I$ if $J$ is minimal  with respect to  inclusion among all the reductions of $I$. If $J$ is a reduction of $I$, then the reduction number of $I$ with respect to $J$ is defined to be $\rr_J(I) = \min\{n \geq 0 | JI^n = I^{n+1}\}$.
The reduction number of $I$ is defined to be
$\rr(I) = \min\{r_J(I)| J { \mbox{ a reduction of }} I \}$. 

\begin{thm}
\label{main-thm}
Let $X$ be a closed subscheme of ${\iP}^n$ of dimension 1, locally defined at any point by at most $n$ equations. 
Let $I\subset k[X_0,\ldots ,X_n]$ be the defining ideal of $X$ and set $\im :=(X_0,\ldots ,X_n)$. Assume that $I_\ip$ is a complete intersection for any minimal prime of $I$. If the analytic spread $\aa (I_\im)$ in $R_\im$ is at most $n$, then :

\been
\item $\depth (R/I^t)>0$ for all $t$.

\item $\reg (\R_I)\leq 1$. In particular, the reduction number  
$\rr(I_\im)$ is at most 1  and the Rees algebra is defined by linear and quadratic equations.

\item
$\Fi_{I}$ is Cohen-Macaulay.

\item
Put $a= \aa (I_\im)$.  Then
$I^t$ is minimally generated by
$ {t + a-1 \choose a-1}
+(\mu(I)-a){t + a-2 \choose a-1}
$
elements.
\eeen
\end{thm}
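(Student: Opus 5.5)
Set $R:=k[X_0,\ldots,X_n]_\im$, a regular local ring of dimension $n+1$, so it is unmixed, satisfies every $S_j$, and has $\depth R=n+1$. Put $r:=n-1$. Then $\dim R/I_\im=2$ and $\height(I_\im)=n-1=r$. The plan is to produce an ideal $J\subseteq I_\im$ generated by $r+1=n$ elements such that $J$ and $I_\im$ satisfy the hypotheses of Lemma~\ref{RegRees}, with $J^{sat}=I_\im$. All four assertions should then follow from Lemma~\ref{RegRees}(b).

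\textbf{Construction of $J$.} We may assume the residue field is infinite, extending it if necessary. Since $\aa(I_\im)\le n$, a general choice of $n$ elements of $I_\im$ (general linear combinations of the generators) generates a minimal reduction $J$ of $I_\im$. I would check the hypotheses of the lemma as follows.

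\emph{Agreement at the punctured spectrum.} Let $\ip\ne\im$ be a prime containing $I$. Then $\dim R/\ip\geq1$ and $X$ is defined near the corresponding point by at most $n$ equations, so $\mu(I_\ip)\le n$. The analytic spread of $I_\ip$ is at most $\height(\ip)$. A general $n$-generated reduction should coincide with $I_\ip$ at every such $\ip$, and I expect this to be the main obstacle. The approach I would try first is to work on the projective scheme $X$ with the sheaf $\mathcal I/\m\mathcal I$. The general $n$ forms of the same degree generate $\mathcal I$ on $\iP^n$ by a standard prime-avoidance/Bertini-type dimension count. This uses local $\mu\le n$ at points of dimension $0$ of $X$ (closed points, where $\height \ip =n$), and at the generic points it uses that $I_\ip$ is a complete intersection of height $n-1$. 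Granting this, $J_\ip=I_\ip$ for all $\ip\ne\im$. Hence $J^{sat}=I_\im$, since $I$ is saturated, being the defining ideal of $X$.

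\emph{Depth condition.} We need $\depth J\ge r=n-1$. This holds because $J$ agrees with $I$ off $\im$, so $\height J\ge n-1$, and $R$ is Cohen--Macaulay.

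\emph{Linear type at minimal primes.} At a prime $\ip$ of height $r$ containing $J$, we have $J_\ip=I_\ip$, which is a complete intersection by hypothesis, hence of linear type. The remaining conditions $\dim R\ge r+1$ and $\depth R=n+1\ge r+2$ are clear.

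\textbf{Applying the lemma.} Lemma~\ref{RegRees}(b) gives $IJ=I^2$, so $\rr(I_\im)\le1$, and $\depth_\im(I^t)\ge2$. The sequence $0\to I^t\to R\to R/I^t\to0$ then gives $\depth R/I^t\ge1$, which is (1). The lemma also gives $\reg(\R_I)\le1$, which is (2). Since $\R_J=\sym(J)$ is a quotient of a polynomial ring by linear forms, and $\R_I$ is an extension of $\R_J$ by $(I/J)\otimes S\{-1\}$, the equations of $\R_I$ are linear and quadratic. By graded Nakayama, $I/J$ is generated in degree one and $I_\im$ is generated in degree zero, which gives the statement about the equations.

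\textbf{Fiber cone and generator count.} Tensor the exact sequence $0\to\R_J\to\R_I\to(I/J)\otimes_R S\{-1\}\to0$ with $k$. For this I need $\operatorname{Tor}_1^R(k,(I/J)\otimes S)=0$ in the relevant sense, or else a direct argument. The alternative I have in mind uses that $J$ is of linear type, has reduction number $1$, and that $\mu(J)=a$ when $a=n$. In that case $\Fi_J$ is a polynomial ring in $a$ variables, and I expect $\Fi_I=\Fi_J\oplus (I/(J+\m I))\otimes\Fi_J(-1)$ as a module. This would make $\Fi_I$ a free module over the polynomial ring $\Fi_J$, hence Cohen--Macaulay, which is (3).

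Counting degree-$t$ components gives
\[
\mu(I^t)=\binom{t+a-1}{a-1}+(\mu(I)-a)\binom{t+a-2}{a-1},
\]
which is (4). If $a<n$, then $I_\im$ is a complete intersection (Cowsik--Nori style, since $a=\height$ forces $J=I$), and the formula follows trivially.
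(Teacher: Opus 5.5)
Your architecture matches the paper's. You localize at $\im$, take $J$ generated by $n$ general combinations of the generators that form a reduction, show $J^{sat}=I_\im$, and apply Lemma~\ref{RegRees}(b) with $r=n-1$.

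\textbf{The gap.} The step you flag yourself, $J_\ip=I_\ip$ for all $\ip\neq\im$, is not proved, and the Bertini-type count you sketch does not prove it. There are two problems with the count.

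\emph{It is applied to a different ideal.} You switch from the reduction $J$ to $n$ general forms of a single degree $D$. The reduction $J$ is generated by general $k$-linear combinations of the minimal generators; it is not homogeneous when these have different degrees, so no sheaf argument on $\iP^n$ applies to it. The forms of degree $D$ need not be a reduction. If $I$ has a minimal generator $f$ of degree $<D$, then $f^2\notin IJ$ for degree reasons. So by Lemma~\ref{RegRees}(b), such forms can \emph{never} agree with $I$ on the punctured spectrum.

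\emph{It only looks at points of $X$.} Off $X$, $n$ general forms in $I$ have an expected zero-dimensional, generally nonempty, residual locus of common zeros. Only $\sqrt J=\sqrt I$ excludes it, and that is where the hypothesis $\aa(I_\im)\le n$ enters.

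\textbf{The missing idea (the paper's).} The locus where $I$ is not a complete intersection is closed: it is where $I/I^2$ fails to be free, $R$ being regular. By hypothesis it contains no minimal prime of $I$. So off $\im$ it consists of finitely many primes $\ip_1,\dots,\ip_m$ with $\dim R/\ip_i=1$.
\begin{enumerate}
\item Since $\mu(I_{\ip_i})\le n$, general combinations generate every $I_{\ip_i}$. These are finitely many nonempty open conditions, compatible with being a reduction.
\item At any other $\ip\neq\im$, $I_\ip$ is the unit ideal or a complete intersection, and a reduction of a complete intersection is the whole ideal. Indeed, $\Fi(I_\ip)$ is a polynomial ring, and linear forms generating an ideal primary to its irrelevant ideal must span $\Fi(I_\ip)_1$. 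Nakayama then gives $J_\ip=I_\ip$.
\end{enumerate}
No dimension count is needed.

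\textbf{Parts (3) and (4).} Here you take a different route from the paper. The paper gets Cohen--Macaulayness of $\Fi_I$ from $\rr(I_\im)\le 1$ and analytic deviation $\le 1$ by citing Shah and Cortadellas--Zarzuela, then reads off $\mu(I^t)$. Your route can be completed, but you leave it at ``I expect''. The vanishing of $\operatorname{Tor}_1^R(k,(I/J)\otimes_R S)$ that you ask for is false whenever $I\neq J$, since $I/J$ is then a nonzero module of finite length.

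What you actually need is only that $\Fi_J\to\Fi_I$ is injective, i.e.\ $J^t\cap\m I^t=\m J^t$. This holds:
\begin{enumerate}
\item When $a=n$, $\Fi_J$ is a polynomial ring in $n$ variables, because $\aa(J)=\aa(I_\im)=n=\mu(J)$.
\item $\Fi_I$ is finite over the image of $\Fi_J$ and has dimension $n$.
\item So the kernel is a height-zero prime of a domain, hence zero.
\end{enumerate}
Lemma~\ref{RegRees}(b) then gives $I^t/(J^t+\m I^t)\cong (I/(J+\m I))\otimes_k k[T_1,\dots,T_n]_{t-1}$. Together with injectivity, this yields the count in (4). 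Moreover, $\Fi_I$ is generated over $\Fi_J$ by $1$ and $\mu(I)-n$ elements of degree one. It has exactly the Hilbert function of $\Fi_J\oplus\Fi_J(-1)^{\mu(I)-n}$, so it is free over $\Fi_J$ and hence Cohen--Macaulay. Completed this way, your argument avoids the external results the paper cites.
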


\begin{proof}
The statements are local at $\im$, so we may localize $R$ and $I$ at $\im$. We may also assume that $k$ is infinite. 
Let $\ip_i$ for $i=1,\ldots ,m$ be the finite number of primes of dimension 1 where $I_{\ip_i} \subset R_{\ip_i}$ is not a complete intersection - these are all among the associated primes of dimension $1$. 
By hypothesis, $I_{\ip_i}\subseteq R_{\ip_i}$ is generated by $n$ elements. Hence $n$ general $k$-linear combinations of the generators of  $I_\im$
generate an ideal $J$ such that $J_{\ip_i}=I_{\ip_i}$ for all $i$; we may further assume that these $n$ general elements generate a
reduction of $I_{\im}$ \cite[Theorem~1, page 150]{n-rees-1954}. In this case, as $J$ is a reduction of $I_\im$, $J$ and $I_\im$ coincide
locally at any prime where $I_\im$ is a complete intersection.  It follows that $I_\im = J^{sat}$, since $\im$ is not associated to $I_{\im}$ and $J$ and $I_\im$ coincide on the punctured spectrum. 

Now (1) and (2) follows from Lemma \ref{RegRees} applied to $J\subset R_\im$ with $r=\mbox{analytic spread of } I_{\im}$.

(3) As the reduction number of $I_{\im}$ is at most $1$, and the
analytic deviation is at most $1$, $\Fi_{I_{\im}}$ is Cohen-Macaulay (by \cite[Corollary~(1a)]{shah},  \cite[Theorem~4.2]{cort-zar}). Hence $\Fi_{I}$ is Cohen-Macaulay, since $I$ is homogeneous.

(4) As   $\Fi_{I_{\m}}$  is Cohen-Macaulay (\cite[Theorem~5.7]{cort-zar}) and $I$ is a homogenous ideal, 
$$
\mu (I^t) = \mu (I_{\m}^t)=
 {t + a-1 \choose a-1}
+(\mu(I)-a){t + a-2 \choose a-1}.
$$
\end{proof}

\begin{exmp}
\label{example-p3}
According to a result of Gimenez, Morales and Simis \cite{GMS}, the analytic spread of the ideal of a monomial curve in ${\iP}^3$
has analytic spread at most $3$, in the localization of $R:=k[X_0,\ldots ,X_3]$ at its unique homogeneous maximal ideal. Hence, by the above result, if $I\subset R$ is its defining ideal :

(1) $\depth (R/I^t)>0$ for all $t$,

(2) $\R_I$ has regularity 1 unless $I$ is generated by at most $3$ elements in which case $\reg (\R_I) =0$. The latter condition holds if and
only if $I$ is perfect.
 
(3) $\Fi_{I}$ is Cohen-Macaulay and for all $t \geq 1$,
$\mu(I^t) = (\mu  (I) -2 ){t+2 \choose 2} - (\mu(I)-3)  (t+1)$. 
\end{exmp}

\section{Examples in $\iP^4$}
\label{section:examples}

The monomial curve parametrized by
$x_{0}=1, x_{1}=w^{a_{1}}, x_{2}=w^{a_{2}} , x_{3}=w^{a_{3}}$ and  $x_{4}=w^{a_{4}}$ on the affine chart
$X_{0}=1$ of ${\iP}^{4}_{k}$ will be called ``the monomial curve of
degrees $(a_{1},a_2, a_3, a_{4})$''. We will denote this curve by ${\mathcal C}(a_1,a_2,a_3,a_4)$.
In this section we consider  monomial curves  ${\mathcal C}(1,2,3,3a+i)$ where  $i=0,1,2$. We  show that if $i=0$, then all the assumptions of Theorem~\ref{main-thm} are satisfied and  if $i=1,2$, then at least one of the assumptions of Theorem~\ref{main-thm} is not satisfied. 
\begin{exmp}
\label{ex1}
Let $a \geq 3$ and let ${\p}_a
\subseteq R := k[x_0, \ldots, x_4]$ be the defining ideal of the monomial curve 
 ${\mathcal C}(1,2,3,3a)$ and $\m = (x_0, \ldots, x_4)$. 
 Then
 \been
 \item
 \label{ex1-1}
$ \p_a =  (g_1, g_2, g_4, g_3)$ where 
$g_1 = x_2^2-x_1x_3$, $g_2= x_1x_2-x_0x_3$, $g_3 =x_3^a - x_0^{a-1} x_4$ and $g_4 = x_1^2-x_0x_2$.

\item
\label{ex1-2-0}
Let $I= (\p_a)_{\m}$. Then
\been
\item
\label{ex1-2}
$I$ is generated by a $d$-sequence and $\rr(I) =0$,

\item
 \label{ex1-3}
 $\R_I \cong \sym_R(I)$, $ \R_I $ is Cohen-Macaulay and $\reg(\R_I)=0$,

\item
 \label{ex1-5}
$\Fi_I  \cong k[u_1, \ldots, u_4] $ where $u_1, \ldots, u_4$ are variables 
and  for all $t \geq 1$, 
${\displaystyle \mu(\p_a^t) = {t+3 \choose 3}}$.
\eeen
 \eeen
\end{exmp} 
\begin{proof} (\ref{ex1-1}): Put $I_a = (g_1, g_2, g_3, g_4)$. One can verify that 
$I_a \subseteq \p_a$,    $x_0, x_4$ is a system of parameters for $R/ I_a$ and 
$I_a + (x_0, x_4) = (x_0, x_1^2, x_1x_2, x_2^2-x_1 x_3, x_3^a,  x_4)$. Putting the graded-rev-lex order  with $x_0 > \cdots > x_4$, we get that  the initial  ideal of $I_a + (x_0, x_4)$ is $(x_0, x_1^2, x_1x_2, x_2^2, x_3^a,  x_4)$. 
Moreover, $(x_1^2, x_1x_2, x_2^2, x_3^a) = (x_1, x_2^2, x_3^a) \cap (x_1^2, x_2, x_3^a)$. Hence, we have  the exact sequence 
\begin{align*}
  0 
   \longrightarrow \frac{k[x_1, x_2, x_3]}{ (x_1^2, x_1x_2, x_2^2, x_3^a)}
   \longrightarrow \frac{k[x_1, x_2, x_3]}{ (x_1, x_2^2, x_3^a) }
            \oplus \frac{k[x_1, x_2, x_3]}{ (x_1^2, x_2,  x_3^a)}
   \longrightarrow \frac{k[x_1, x_2, x_3]}{ (x_1, x_2, x_3^a)}
   \longrightarrow 0,
   \end{align*}
which implies that 
   \beq
   \label{length 3a} 
   \ell\left( \frac{k[x_1, x_2, x_3]}{ (x_1^2, x_1x_2, x_2^2, x_3^a)}\right)
   &=& \ell\left( \frac{k[x_1, x_2, x_3]}{ (x_1,  x_2^2, x_3^a)} \right)
   + \ell\left(  \frac{k[x_1, x_2, x_3]}{ (x_1^2, x_2,  x_3^a)} \right)
   - \ell \left( \frac{k[x_1, x_2, x_3]}{ (x_1, x_2, x_3^a)}\right)\\ \nonumber
   &=& 2a + 2a -a  \\ \nonumber
   & = & 3a.   
   \eeq
 Put $\m= (x_0, x_1, x_2, x_3, x_4)$. Then
\begin{align}
\label{multiplicity 3a} 
  3a 
& =  e( (x_0, x_4) ; R/ \p_a) \\  \nonumber
&=  e( (x_0, x_4) ; R_{\m}/ \p_a R_{\m})  & \mbox{\cite[Theorem~15, page 333]{northcott-lessons}}\\ \nonumber
&\leq   \ell \left(  \frac{R_{\m} }{ (\p_a + (x_0, x_4)) R_{\m}} \right) & \\ \nonumber
&\leq   \ell \left(  \frac{R}{ I_a + (x_0, x_4)} \right) & \\ \nonumber
&= \ell \left(  \frac{R}{ (x_0, x_1^2, x_1x_2, x_2^2, x_3^a,  x_4)} \right) & \\ \nonumber
&= \ell \left(  \frac{k[x_1, x_2, x_3]}{ (x_1^2, x_1x_2, x_2^2, x_3^a)} \right) & \\ \nonumber
&= 3a & \mbox{[from (\ref{length 3a})]} .
\end{align}
Hence $I_a  + (x_0, x_4) = \p_a + (x_0, x_4)$ which implies that $\p_a  + ( x_0)\subseteq  I_a + (x_0, x_4)$. 
Let $\alpha \in \p_a + (x_0)$.  Then 
$\alpha = \beta  + x_4 r$ where $\beta \in  I_a + (x_0)$  and $r \in R$. This implies that 
$x_4 r = \alpha - \beta \in \p_a + (x_0) $ as $I_a \subseteq \p_a$.  Hence $r \in  \p_a + (x_0) : (x_4)$. By  (\ref{multiplicity 3a}),  $ x_4$ is a regular element on $R/ \p_a + (x_0) $   which implies that   $(\p_a + x_0) : (x_4)   = \p_a + (x_0)$. 
This implies that $r \in  \p_a + (x_0)$ and hence  $\p_a + (x_0) \subseteq I_a  + (x_0) + x_4(p_a + (x_0)) \subseteq I_a  + (x_0) + \m(p_a + (x_0))\subseteq \p_a + (x_0)$. By graded Nakayama lemma, $\p_a + (x_0) = I_a  + (x_0)$.  
This implies that $\p_a \subseteq I_a + (x_0)$. Let $f \in \p_a$. Then $f = g + x_0 h$ where $g \in I_a$ and $h \in R$. 
This implies that $x_0h = f-g \in \p_a$ as $I_a \subseteq \p_a$.  By  (\ref{multiplicity 3a}), as $x_0$ in nonzerodivisor on $R/\p_a$. 
Hence $h \in \p_a : (x_0) = \p_a$. 
This gives $\p_a \subseteq I_a  +  x_0 \p_a  \subseteq I_a  + \m \p_a \subseteq \p_a$. By graded Nakayama lemma, $\p_a = I_a$.  
  This proves (\ref{ex1-1}).

(\ref{ex1-2}): For all $2 \leq i \leq k \leq 4$, $g_1, g_i$ is a regular sequence. Hence 
\beqn
((g_1):g_i g_k) &=& ((g_1) : g_i) : g_k) =  ((g_1): g_k).  
\eeqn
We also have 
\beqn
(g_1, g_2) : (g_3^2) &=&  ( (g_1, g_2) : g_3)= ((g_1, g_2, x_3 g_4)   : g_3) = (g_1, g_2, x_3 g_4),\\
(g_1, g_2) : (g_3g_4) &=&( ((g_1, g_2) : g_3)  : g_4 ) = (g_1, g_2, x_3 g_4) : (g_4) = (x_2, x_3),\\
(g_1, g_2): g_4 &=& (x_2, x_3) =  (g_1, g_2): g_4^2 .
\eeqn
By \cite{huneke1}, $g_1, g_2, g_3, g_4$ is a $d$-sequence and hence they are analytically independent (\cite[Theorem~2.2]{huneke1}). Hence $\rr(I) =0$.

(\ref{ex1-3}):  Since $I$ is generated by a $d$-sequence,   $\R_I \cong \sym_R(I)$ (\cite[Theorem~3.1]{huneke3}) and $\reg(\R_I)=0$   (\cite[Corollary~1.2]{trung}).  

(\ref{ex1-5}): By \cite[Theorem~2.2]{huneke1}, 
$
 \Fi_I  \cong  \R_I /\m  \R_I =  k[x_0, \ldots, x_4, u_1, \ldots, u_4]  / \m 
 \cong  k[u_1, \ldots, u_4] 
$. 
It follows that $\reg(\Fi_I)=0$ and that  
${\displaystyle  \mu(\p_a^t) = \mu(I^t) = {t+3 \choose 3}}$.

\end{proof}

Our next example are curves in $\iP^4$ of analytic spread $4$. However, some of  the assumptions in Theorem~\ref{main-thm} are  not satisfied. 
\begin{exmp}
\label{ex2}
Let $a \geq 3$ and let ${\p}_a
\subseteq R := k[x_0, \ldots, x_4]$ be the defining ideal of the monomial curve 
 ${\mathcal C}(1,2,3,3a+1)$. 
 Then
 \been
 \item
 \label{ex2-1}
$ \p_a =  (g_1, \ldots g_6)$ where 
 $g_1 =  x_2^2-x_1x_3$, 
 $g_2 = x_1x_2-x_0x_3$,
 $g_3 = x_1^2-x_0x_2$,   
 $g_4= x_3^{a+1}-x_0^{a-1}x_2x_4$,   
 $g_5= x_2x_3^a-x_0^{a-1}x_1x_4$ and 
 $g_6 = x_1x_3^a-x_0^ax_4$. 
 
 \item
 \label{ex2-2}
 Put $I = (\p_a)_{\m}$. Then:
 \been
 \item
  \label{ex2-2-1}
$\aa (I)=4$.
   
   \item
    \label{ex2-2-2}
$\rr(I) =2$.

\item
 \label{ex2-2-3}
${\displaystyle \mu(\p_a^t)  = {t+3 \choose 3} + 2 {t+2 \choose 3} + {t+1 \choose 3}.}$
 \eeen
 
  \item
  \label{ex2-4} $\reg ( \R_I) \geq 2$. 
\eeen
\end{exmp}
\begin{proof}
(\ref{ex2-1}): Put  $I_a = (g_1, \ldots, g_6)$. One can verify that $ I_a \subseteq \p_a$.
Note that  $x_0, x_4$ is a system of parameters for $R/ I_a$ and 
$I_a + (x_0, x_4)=  (x_0, x_1^2,  x_1x_2, x_2^2-x_1x_3,   x_3^{a+1},  x_1x_3^a, x_2x_3^a, x_4).$ Putting the 
graded-rev-lex order,  we get that  the leading ideal of $I_a$ is   $(x_0, x_1^2, x_1x_2, x_2^2, x_3^{a+1},  x_1x_3^a,  x_2x_3^a,  x_4)$. 
Moreover, $( x_1^2, x_1x_2, x_2^2, x_3^{a+1},  x_1x_3^a,  x_2x_3^a) 
= ( x_1,  x_2^2, x_3^{a}) \cap ( x_1^2, x_2, x_3^{a})\cap ( x_1, x_2, x_3^{a+1})$. Hence from the exact sequences 
\begin{align}
  \label{length 3a_1-2}
      0 
   \rightarrow \frac{k[x_1, x_2, x_3]}{ ( x_1^2, x_1x_2, x_2^2, x_3^{a+1},  x_1x_3^a,  x_2x_3^a) }
   \rightarrow \frac{k[x_1, x_2, x_3]}{ (x_1^2, x_1x_2, x_2^2, x_3^{a})  }
            \oplus \frac{k[x_1, x_2, x_3]}{ ( x_1, x_2, x_3^{a+1})}
   \rightarrow\frac{k[x_1, x_2, x_3]}{ (x_1, x_2, x_3^{a})}
   \rightarrow 0\\
\label{length 3a_1-1}
   0 
   \rightarrow \frac{k[x_1, x_2, x_3]}{ ( x_1^2, x_1x_2, x_2^2, x_3^{a}) }
   \rightarrow \frac{k[x_1, x_2, x_3]}{ ( x_1,  x_2^2, x_3^{a})  }
            \oplus \frac{k[x_1, x_2, x_3]}{ ( x_1^2, x_2, x_3^{a})}
   \rightarrow\frac{k[x_1, x_2, x_3]}{ (x_1, x_2, x_3^{a})}
   \rightarrow 0
  \end{align}
   we get 
   \begin{align}
   \label{length 3a+1} 
  & \ell\left( \frac{k[x_1, x_2, x_3]}{ (x_1^2, x_1x_2, x_2^2, x_3^{a+1},  x_1x_3^a,  x_2x_3^a)}\right) & \\ \nonumber
  &= \ell\left( \frac{k[x_1, x_2, x_3]}{ (x_1^2, x_1x_2, x_2^2, x_3^{a})} \right) 
  + \ell\left(  \frac{k[x_1, x_2, x_3]}{ (x_1, x_2, x_3^{a+1})} \right)
    - \ell \left( \frac{k[x_1, x_2, x_3]}{ (x_1, x_2, x_3^a)}\right)  
    & \mbox{[from \ref{length 3a_1-2}]} \\ \nonumber
   &= \ell\left( \frac{k[x_1, x_2, x_3]}{ (x_1,  x_2^2, x_3^a)} \right)
   + \ell\left( \frac{k[x_1, x_2, x_3]}{ (x_1^2,  x_2, x_3^a)} \right) 
   -  \ell \left( \frac{k[x_1, x_2, x_3]}{ (x_1, x_2, x_3^a)}\right) & \\ \nonumber
  & + \ell\left(  \frac{k[x_1, x_2, x_3]}{ (x_1, x_2, x_3^{a+1})} \right)
    - \ell \left( \frac{k[x_1, x_2, x_3]}{ (x_1, x_2, x_3^a)}\right) 
    & \mbox{[from \ref{length 3a_1-1}]}\\ \nonumber
   &= 2a + 2a -a  + (a+1) - a&\\ \nonumber
   &= 3a+1. &  
   \end{align}
Put $\m= (x_1, x_2, x_3, x_4, x_5)$. Then
 \begin{align}
\label{multiplicity 3a+1} 
            3a +1
& =       e( (x_0, x_4) ; R/ \p_a) &\\ \nonumber
&=       e( (x_0, x_4) ; R_{\m}/ \p_a R_{\m})  &\mbox{\cite[Theorem~15, page 333]{northcott-lessons}} \\ \nonumber
&\leq   \ell \left(  \frac{R_m }{ (\p_a + (x_0, x_4)) R_{\m}} \right) & \\ \nonumber
&\leq   \ell \left(  \frac{R}{I_a + (x_0, x_4)} \right) & \\ \nonumber
&=      \ell \left(  \frac{k[x_1, x_2, x_3]}{( x_1^2, x_1x_2, x_2^2, x_3^{a+1},  x_1x_3^a,  x_2x_3^a)} \right) & \\ \nonumber
&=      3a+1 &  \mbox{[from (\ref{length 3a+1})]} .
\end{align}
Hence $I_a  + (x_0, x_4) = \p_a + (x_0, x_4)$.
The rest of the proof is similar to the proof in Example~\ref{ex1}(\ref{ex1-1}).
This proves (\ref{ex2-1}). 

\noindent
(\ref{ex2-2})
Let $\q = (g_1, g_2, g_3+g_4, g_6)S$ and $J = \q R$. 
We claim that  $J $ is a minimal reduction of $I=(g_1, g_2, g_3+g_4, g_6, g_3, g_5) $.  We have the following relations:
\beq
\label{ex2-minred-g4}
g_3^2 &=& g_1g_6-g_2 g_5+g_3(g_3+g_4)\\ 
\label{ex2-minred-g5}
        g_5^2 &=& -x_0^{a-2}x_3^{a-1}x_4 g_1(g_4+g_3)
 + (x_0^{a-2}x_3^{a-1}x_4+x_3^{a-1})g_1g_4 
+ x_0^{a-2}x_3^{a-1}x_4g_2^2 \\ \nonumber&&
 -x_0^{a-2}x_4(g_3+g_4)g_6 
+ (x_0^{a-2}x_4+1)g_4g_6.
\eeq

(\ref{ex2-2-1})  
 From the relations   (\ref{ex2-minred-g4})  and (\ref{ex2-minred-g5})
we conclude that  $(g_4, g_5)^3 \subseteq JI^2$ and hence $JI^2 = I^3$. This implies that  $\aa (I) \leq 4$. 
Since the $\height(I)=3$,  we have $\aa (I) \geq 3$.  If  $\aa (I) =3$, then $\height(I) = \aa(I)$ and hence by \cite[Theorem~1.3]{huneke-rees},   $I$ is generated by a regular sequence which leads to a contradiction as $I$ is minimally generated by $6$ elements. Hence $\aa (I) = 4$ and $\rr(I) \leq 2$.

\noindent
(\ref{ex2-2-2}) 
If $\rr(I)=1$, then  $\mu(I^2) = \mu(JI) =18$ by \cite[Corollary~5.8]{cort-zar}. We claim  that $\mu(I^2) =  19$. 
Since $g_1, \ldots, g_6$ is a minimal generating set  for $I$, $\mu(I^2) \leq {7 \choose 2}=21$. The relations
 (\ref{ex2-minred-g4}) and (\ref{ex2-minred-g5}) imply that $\mu(I^2) \leq 19$. 
 To prove the claim it is enough to show that  $g_2g_6-g_3g_5 \not \in JI$. 
We  write 
\beq
\label{groebner-ji}
\q \p_a
&=& (g_1^2,
g_1g_2,
g_2^2 -g_1g_3,
g_2^2,
 g_2 g_3, 
 g_1 g_6-  g_2 g_5+  g_3( g_4+ g_3),
  g_1 g_4, 
 g_2 g_4, \\ \nonumber && 
g_3g_4, 
 g_1g_5 + g_2g_4, 
 g_2 g_5, 
 g_2 g_6, g_3 g_6
 g_3g_5-g_2g_6,
g_4^2,
g_4g_5 + g_3g_5-g_2g_6,\\ \nonumber && 
g_4 g_6,
g_5 g_6 - x_3^{a-1} g_2 g_4,
g_6^2 - x_3^{a-1} g_3  g_4).
\eeq
 Consider the polynomial ring $S = \kk[x_0, \ldots, x_4]$. We put the graded-rev-lex order on $S$ with $x_0 > x_1 > x_2 > x_3 >x_4$. Computing the S-polynomials of generators listed in  (\ref{groebner-ji}) we  conclude that it  is a Gr\"{o}bner basis of $\q \p_a$. 
 Let $LI( \q \p_a)$ denote the initial ideal  of $\q \p_a$. Then 
 \beqn
LI( \q \p_a) &=& (x_2^4,
x_1x_2^3,
x_0x_2^3,
x_1^2x_2^2,
x_1^3x_2,
x_1^4,
x_2^2x_3^{a+1},
x_1x_2x_3^{a+1},
x_1^2x_3^{a+1},
x_2^3x_3^{a},
x_1x_2^2x_3^{a},\\&&
x_1^2x_2x_3^{a},
x_1^3x_3^{a},
x_0^2x_2^2x_3^{a},
x_3^{2a+2},
x_2x_3^{2a+1},
x_1x_3^{2a+1},
x_0x_3^{2a+1},
x_0x_2x_3^{2a}).
 \eeqn
 Since 
 \beqn
 g_2 g_6 - g_3 g_5 = x_0 x_2^2 x_3^a
-x_0 x_1 x_3^{a+1}+x_0^{a-1} x_1^3 x_4-2 x_0^a x_1 x_2 x_4+x_0^{a+1} x_3 x_4,
\eeqn
the initial term of $g_2 g_6 - g_3 g_5$, is $x_0 x_2^2 x_3^a \not \in LI( \q \p_a)$.  Hence $g_2 g_6 -g_3g_5 \not \in \q \p_a$. 
We claim that $g_2 g_6 -g_3g_5  \not \in JI = \q \p_a S_{\m}$. Suppose $g_2 g_6 -g_3 g_5  \in J I$. Then there exists an element  $\alpha \in \m = (x_0, x_1, x_2, x_3, x_4)$ such that $(1-\alpha) (g_2 g_6 - g_3 g_5) \in \q \p_a$. 
Note that the initial term  of $x_i(  g_2 g_6 - g_3 g_5 )$ is $x_i(  x_0 x_2^2 x_3^a ) \in  LI( \q \p_a)$ for $i=0,1,2,3$.  Hence,  we only need to consider $\alpha = \alpha_1 x_4 + \cdots +  \alpha_b x_4^b$ where  $b >0$ and $\alpha_i \in k$. 
Then the initial term of 
$(1  - \alpha)  (g_2 g_6 - g_3 g_5) $  is $a_b x_0x_2^2x_3^a x_4^b\not \in LI( \q \p_a)$. 
 But $(1-\alpha) (g_2 g_6 - g_3 g_5) \in \q \p_a $  implies that $- \alpha (g_2 g_6 - g_3 g_5) = (1-\alpha) (g_2 g_6 - g_3 g_5) - (g_2 g_6 - g_3 g_5)  \in \q \p_a$  which leads to a contradiction. Hence 
$g_2 g_6 - g_3 g_5  \not \in J I$.  This implies that  $\mu(I^2) \geq 19$  and hence $\rr(I)=2$.
 
\noindent
(\ref{ex2-2-3}) 
Since $J$ is a minimal reduction of $I$ and   $\rr(I) = 2$,
 \beq
 \label{ex2-length-fc}
 \frac{\Fi_I}{J \Fi_I} 
 = \frac{R}{\m} \oplus \frac{I }{J + \m I} \oplus \frac{I^2}{JI + \m I^2}. 
  \eeq
Since  $J$ is a minimal reduction of $I$, $J \cap \m I = \m J$ and we have the exact sequence
\beq
 \label{ex2-nog}
                  0 
\longrightarrow \frac{J + \m I }{\m I } \cong \frac{J}{\m J} 
\longrightarrow  \frac{I}{\m I} 
\longrightarrow \frac{I}{J + \m I}
\longrightarrow 0.
\eeq
From (\ref{ex2-length-fc}) and (\ref{ex2-nog}) we get
  \beq
   \label{comp-length}
  \ell \left(  \frac{\Fi_I}{J \Fi_I} \right)
  &=& \ell \left(   \frac{R}{\m} \right)
  +  \ell \left(   \frac{I}{ J + \m I} \right) 
  + \ell \left(  \frac{I^2}{JI + \m I^2}\right)\\ \nonumber
  &= &\ell \left(   \frac{R}{\m} \right)
  +  \ell \left(   \frac{I}{\m I} \right) -  \ell \left(   \frac{J}{\m J} \right)
  + \ell \left(  \frac{I^2}{JI + \m I^2}\right)\\ \nonumber
   &=& 1 + (6-4) + 1\\ \nonumber
   &=&4.
  \eeq
  Let $\Fi_I = R/ \m[u_1, \ldots, u_6]/K $.
From the relations (\ref{ex2-minred-g4}) and (\ref{ex2-minred-g5})  we get that $u_5^2 - u_1u_4 + u_2 u_6 - u_3u_5,   u_6^2   - u_5u_4 \in K$.
Thus from  (\ref{comp-length}) 
we get 
  \beq
  \label{seq-ineq}
  4= 
\ell \left(  \frac{\Fi_I}{J \Fi_I} \right)
 =\ell \left(  \frac{R/ \m R[u_1, \ldots, u_6]}{K  + (u_1, u_2, \ldots, u_4)} \right)
\leq \ell \left(  \frac{R/ \m R[u_1, \ldots, u_6]}{ (u_5^2, u_6^2)  + (u_1, u_2, \ldots, u_4)} \right)
=4.
  \eeq
  Hence equality holds in (\ref{seq-ineq}) which implies that $K = (u_5^2 - u_1u_4 + u_2 u_6 - u_3u_5,   u_6^2   - u_5u_4 )$
 and the Hilbert Series of $\Fi_I$ is 
\beqn
H(\Fi_I, u) = \frac{(1-u^2)^2}{(1-u)^6} = \frac{1 + 2u+u^2}{(1-u)^4}. 
\eeqn 
We compare  the terms of degree $t$  and  using the fact that $\p_a$ is homogenous ideal, 
\beqn
\mu(\p_a^t) = \mu(I^t) =  {t+3 \choose 3} + 2 {t+2 \choose 3} + {t+1 \choose 3}.
\eeqn

(\ref{ex2-4})
For all $i \geq 0$, the $i$-th a-invariant of the Rees algebra is   ${a_i} (\R_I) = \max \{ n |  [H^i_{  (\R_I)_{+} }  (\R_I)]_n \not = 0\} $ and $\reg(\R_I) = \max \{ a_i + i | i \geq 0\}$. Put  $J^{[k]} = (g_1^k, g_2^k, (g_3 + g_4)^k, g_6^k)$.   Then
\beqn
 [H^4_{  (\R_I)_{+} }  (\R_I)]_n= \limnk \frac{I^{4k + n}}{J^{[k]} I^{3k +n}}.
\eeqn
As $JI^2 = I^3$,  for  all $n \geq -1$, for all $k \geq 1$,
\beqn
J^{[k]} I^{3k  + n} = J^{[k]} J^{3(k-1)} I^{n+3} = J^{ k + 3(k-1)} I^{n+3} = J^{4k-3} I^{n+3} = I^{4k+n}
\eeqn
which implies that $ [H^4_{  (\R_I)_{+} }  (\R_I)]_{n}=0$ for all $n \geq -1$.  We claim that   
\beqn
 [H^4_{  (\R_I)_{+} }  (\R_I)]_{-2}
=\frac{I^{4k-2}}{ J^{[k]} I^{3k -2} }
= \left( \frac{\Fi_I  }{J^{[k]} \Fi_I } \right)_{4k-2}  \not =0.
\eeqn
Since $\Fi_I$ is Cohen-Macaulay,  $g_1^k, g_2^k, (g_3+g_4)^k, g_6^k$ is a regular sequence  in $\Fi_I$ and hence the Hilbert series of 
$\Fi_I / J^{[k]} \Fi_I$ is 
\beq
\label{ex2-hilbert}
H \left( \frac{\Fi_I  }{J^{[k]} \Fi_I },  u \right)
= H \left( \Fi_I \right) (1-u^k)^4
= (1 + 2u + u^2)(1+u + \cdots + u^{k-1})^4.
\eeq
From the Hilbert series in (\ref{ex2-hilbert}) it follows that for all $k \geq 1$, 
\beqn
\dim \left( \frac{\Fi_I  }{J^{[k]} \Fi_I } \right)_{4k-2} = \dim \left( \frac{I^{4k-2}}{ J^{[k]} I^{3k -2} }\right) = 1, 
\eeqn
which  implies that $[H^4_{  (\R_I)_{+} }  (\R_I)]_{4k-2} =1$. Hence $a_4(\R_I) = -2$ which gives $\reg( \R_I) \geq 2$. 
\end{proof}

\begin{exmp}
\label{ex3}
Let $a \geq 3$ and let ${\p}_a={\mathcal I}(1,2,3,3a+2) 
\subseteq R := k[x_0, \ldots, x_4]$ be the defining ideal of the monomial curve  ${\mathcal C}(1,2,3,3a+2)$. 
 Then
 \been
 \item
 \label{ex3-1}
 $\p_a = (g_1, g_2, g_3, g_4, g_5)$ where 
$g_1=x_2 x_3^a  - x_0^a x_4 + x_2^2  - x_1 x_3$,
$g_2=x_1 x_2-x_0 x_3$,
$g_3=x_1^2-x_0 x_2$, 
$g_4=x_3^{a+1}-x_0^{a-1} x_1x_4$ and 
$g_5=x_2 x_3^a-x_0^a x_4$. 
 \item
 \label{ex3-2}
 Put $I = (\p_a)_{\m}$.
 
 \been
 \item
  \label{ex3-2-1}
  Then  $\aa(I) = 4$ and $\rr(I)=2$.
 
 \item
  \label{ex3-3-2-2}
  For all $t \geq 0$, $\mu(\p_a^t) = {t+3 \choose 3} +  {t+2 \choose 3} + {t+1 \choose 3} $.

 \item
   \label{ex3-3-1}
 $\reg(\Fi_I)= 2$.
 \eeen
  \item
  \label{ex3-4} $\reg ( \R_I) \geq 2$. 
\eeen
\end{exmp}
\begin{proof}
(\ref{ex3-1}) 
Put  $I_a = (g_1, g_2, g_3, g_4, g_5)$. One can verify that 
$I_a \subseteq \p_a$. 
Note that  $x_0, x_4$ is a system of parameters for $R/ I_a$ and 
$I_a + (x_0, x_4) 
= (x_0,  x_2^2  - x_1 x_3, x_1 x_2, x_1^2,  x_3^{a+1}, x_2 x_3^a,  x_4)$. 
Putting the graded-rev-lex order we get that the leading ideal of $I_a + (x_0, x_4)$ is 
$(x_0, x_1^2, x_1x_2, x_2^2,   x_2 x_3^a,x_3^{a+1}, x_4)$. 
Moreover, $(x_1^2, x_1x_2, x_2^2, x_2x_3^a, x_3^{a+1}) 
= (x_1, x_2^2, x_3^a) \cap (x_1^2, x_2, x_3^{a+1})$. Hence from the exact sequence 
\beqn
   0 
   \longrightarrow \frac{k[x_1, x_2, x_3]}{ (x_1^2, x_1x_2, x_2^2, x_2x_3^a, x_3^{a+1})}
   \longrightarrow \frac{k[x_1, x_2, x_3]}{ (x_1,  x_2^2, x_3^a) }
            \oplus \frac{k[x_1, x_2, x_3]}{ (x_1^2, x_2,  x_3^{a+1})}
   \longrightarrow \frac{k[x_1, x_2, x_3]}{ (x_1, x_2, x_3^a)}
   \longrightarrow 0
   \eeqn
   we get 
   \beq
   \label{length 3a+2} 
   &&\ell\left( \frac{k[x_1, x_2, x_3]}{ (x_1^2, x_1x_2, x_2^2, x_2x_3^a, x_3^{a+1})}\right)\\  \nonumber
   &=& \ell\left( \frac{k[x_1, x_2, x_3]}{ (x_1,  x_2^2, x_3^a)} \right)
   + \ell\left(  \frac{k[x_1, x_2, x_3]}{ (x_1^2,  x_2, x_3^{a+1})} \right)
   - \ell \left( \frac{k[x_1, x_2, x_3]}{ (x_1, x_2, x_3^a)}\right)\\ \nonumber
   &=& 2a + 2(a+1) -a = 3a+2. 
   \eeq
   Put  $\m= (x_0, x_1, x_2, x_3, x_4)$. Then 
\begin{align}
\label{multiplicity 3a+2} 
  3a +2
& =  e( (x_0, x_4) ; R/ \p_a) \\  \nonumber
&=  e( (x_0, x_4) ; R_{\m}/ \p_a R_{\m})  & \mbox{\cite[Theorem~15, page 333]{northcott-lessons}}\\ \nonumber
&\leq   \ell \left(  \frac{R_{\m} }{ (\p_a + (x_0, x_4)) R_{\m}} \right) & \\ \nonumber
&\leq   \ell \left(  \frac{R}{ I_a + (x_0, x_4)} \right) & \\ \nonumber
&= \ell \left(  \frac{R}{ (x_0, x_1^2, x_1x_2, x_2^2, x_2x_3^a,x_3^{a+1},   x_4)} \right) & \\ \nonumber
&= \ell \left(  \frac{k[x_1, x_2, x_3]}{ (x_1^2, x_1x_2, x_2^2, x_2x_3^a, x_3^{a+1})} \right) & \\ \nonumber
&= 3a +2& \mbox{[from (\ref{length 3a+2})]} .
\end{align}
Hence $I_a  + (x_0, x_4) = \p_a + (x_0, x_4)$.
The rest of the proof is similar to the proof in Example~\ref{ex1}(\ref{ex1-1}).
This proves (\ref{ex3-1}).

(\ref{ex3-2-1})  Let $I= (\p_a)_{\m}$.  We claim that $J= (g_1, g_2, g_3, g_4)$ is a minimal reduction of $I$.
The relation
\beq
\label{ex3-redno} \nonumber
0 =  x_0^{a-2} x_3^{a-1} x_4 g_2^3 -x_0^{a-3}x_1x_3^{a-1}x_4 g_2^2 g_3 +x_0^{a-3} x_2 x_3^{a-1} x_4 g_2 g_3^2
+x_3^{a-1} g_1^2 g_4  \\              
-x_0^{a-2} x_4 g_3^2 g_5                              -2 x_3^{a-1} g_1 g_4 g_5
+x_3^{a-1} g_4 g_5^2                     -g_3 g_4^2                                                +g_2 g_4 g_5-g_1 g_5^2+g_5^3
\eeq
implies that $JI^2 = I^3$ and $\aa(I)\leq 4$.  Since $\height(I)=3$, we have $\aa(I) \geq 3$.  If $\aa(I)=3$, then since $\height(I) = 3$ by \cite[Theorem~1.3]{huneke-rees},  $I$ is generated by a regular sequence which leads to a contradiction.  Hence $a(I) =4$ and that $J$ is a minimal reduction of $I$. 
From (\ref{ex3-redno}) we get that $\rr(I) \leq 2$.

Since $J$ is a minimal reduction of $I$, 
\beqn
\Fi_J &\cong&  R_{\m} / \m R_{\m}[u_1, u_2, u_3, u_4]\\
\Fi_I &\cong& \frac{R_{\m} / \m R_{\m}[u_1, u_2, u_3, u_4, u_5]}{ ( f)}
\eeqn
where $f= u_5^3 -u_1 u_5^2 +u_2 u_4 u_5-u_3 u_4^2$. 
Therefore  
  the Hilbert series of $\Fi_I$ is 
\beq
\label{ex-3-hilbert series}
     H(\Fi_I, u) 
= \frac{1 +  u + u^2}{(1-u)^4}
\eeq
and we conclude that $\rr(I)=2$.

  (\ref{ex3-3-2-2})
As $\p_a$ is homogenous ideal, from (\ref{ex-3-hilbert series}), for all $t \geq 1$
\beqn
\mu(\p_a^t) = \mu(I^t)  = {t+3 \choose 3} +  {t+2 \choose 3} + {t+1 \choose 3}.
\eeqn

(\ref{ex3-3-1})
 Put $K = R_{\m} / \m R_{\m}$.
The exact sequence 
\beqn
      0 
\lto K [u_1, u_2, u_3, u_4, u_5] [-3]
\lto K[u_1, u_2, u_3, u_4, u_5] 
\lto \frac{K[u_1, u_2, u_3, u_4, u_5]} { (f) }
\lto 0 
\eeqn
 imples that
 $\reg(\Fi_I)= 2$. 

(\ref{ex3-4})
For all $i \geq 0$, the $i$-th a-invariant of the Rees algebra of $I$ is   ${a_i} (\R_I) := \max \{ n |  [H^i_{  (\R_I)_{+} }  (\R_I)]_n \not = 0\} $ and $\reg(\R_I) = \max \{ a_i + i | i \geq 0\}$. Put  $J^{[k]} = (g_1^k, g_2^k, g_3^k, g_4^k)$. Then
\beq
\label{ex-3 cohomology}
 [H^4_{  (\R_I)_{+} }  (\R_I)]_n= \limnk \frac{I^{4k + n}}{J^{[k]} I^{3k +n}}.
\eeq
As $JI^2 = I^3$,  for  all $n \geq -1$, for all $k \geq 1$,
\beqn
J^{[k]} I^{3k  + n} = J^{[k]} J^{3(k-1)} I^{n+3} = J^{ k + 3(k-1)} I^{n+3} = J^{4k-3} I^{n+3} = I^{4k+n}
\eeqn
which implies that $ [H^4_{  (\R_I)_{+} }  (\R_I)]_{n}=0$ for all $n \geq -1$.  We claim that   $ [H^4_{  (\R_I)_{+} }  (\R_I)]_{-2} \not=0$.

As $\Fi_I$ is Cohen-Macaulay and $g_1^k, g_2^k, g_3^k, g_4^k$ is a system of parameters and hence a regular sequence in $\Fi_I$. Therefore,  the Hilbert Series of 
$\Fi_I / J^{[k]} \Fi_I$ is 
\beqn
H \left( \frac{\Fi_I  }{J^{[k]} \Fi_I },  u \right)
= H \left( \Fi_I \right) (1-u^k)^4
= (1 + u + u^2)(1+u + \cdots + u^{k-1})^4
\eeqn 
which implies that 
\beq
\label{ex-2 fibercone}
\dim \left( \frac{\Fi_I  }{J^{[k]} \Fi_I } \right)_{4k-2} = \dim \left( \frac{I^{4k-2}}{ J^{[k]} I^{3k -2} }\right) = 1.
\eeq
From (\ref{ex-3 cohomology}) and  (\ref{ex-2 fibercone}) we get $\ell \left(  [H^4_{  (\R_I)_{+} }  (\R_I)]_{4k-2}\right)=1 >0$. 
Hence we conclude that  $a_4(\R_I) = -2$ which implies that  $\reg( \R_I) \geq 2$. 
\end{proof}

\end{document}